\def\ef{\mbox{EF}}
\def\MEG{\mbox{MEG}}
\def\ma{M}
\def\mb{N}
\let\footnote=\endnote
\newtheorem{theorem}{Theorem}
\let\footnote=\endnote
\newcommand{\open}{\Bbb}
\newcommand{\oN}{{\open N}}
\def\fraisse{Fra\"\i ss\' e\ }
\begin{document}

\title{The Strategic Balance of Games in Logic\footnote{This paper is written in response to a question by Samson Abramsky, about where in my book \cite{MR2768176} are the promised translations between strategies of the various games that I present. To my surprise I had not given the translations explicitly in the book, perhaps because in the book I move freely from games to inductive definitions and back. In this paper the translations are given explicitly and I thank Samson for pointing out the omission in my book.}}
\author{Jouko V\"a\"an\"anen\thanks{This project has received funding from the Academy of Finland (grant No 322795) and from the European Research Council (ERC) under the
European Union’s Horizon 2020 research and innovation programme (grant agreement No
101020762).}\\ University of Helsinki, Finland\\ and\\ University of Amsterdam, The Netherlands}
\maketitle

\begin{abstract}
Truth, consistency and elementary equivalence can all be characterised in terms of games, namely the so-called evaluation game, the model-existence game, and the Ehrenfeucht-\fraisse game. We point out the great affinity of these games to each other and call this phenomenon the \emph{strategic balance in logic}. In particular, we give explicit translations of strategies from one game to another.
\end{abstract}

\section{The three games of logic}

The Game Theoretical Semantics of first order logic, and many other logics too, is a combination of three games: The Evaluation Game, the Model Existence Game and the EF game (short for Ehrenfeucht-Fra\"\i ss\'e Game). These games are closely related to each other and cover the main semantic concepts of logic. We use the term ``Game Theoretical Semantics" for the general approach that emphasises and utilizes these games in semantics, rather than some other approach, e.g. set-theoretical semantics in the Tarskian sense. 

In the Evaluation Game we have a structure $A$ and a sentence $\phi$. The game gives meaning to the proposition that $\phi$ is true in $A$. The meaning is equivalent to the one given by Tarski's Turth Definition. In the Model Existence Game the structure $A$ is missing and we have just the sentence $\phi$. The game gives meaning to the proposition that $\phi$ is consistent, or alternatively, that there is a structure in which $\phi$ is true. This game appears in different incarnations throughout logic, all the way from Gentzen's Natural Deduction, to Beth Tableaux, to Dialogical Games, to Hintikka's model sets, to semantic trees, to Smullyan's  consistency properties. Finally, in the EF game the sentence $\phi$ is missing and we just have two models $A$ and $B$. The game gives meaning to the proposition that some sentence is true in $A$ but false in $B$. Alternatively we can say that the EF game gives meaning to, or equivalently a definition of the elementary equivalence of $A$ and $B$. 

\begin{figure}
\begin{center}
\includegraphics[width=0.8\textwidth]{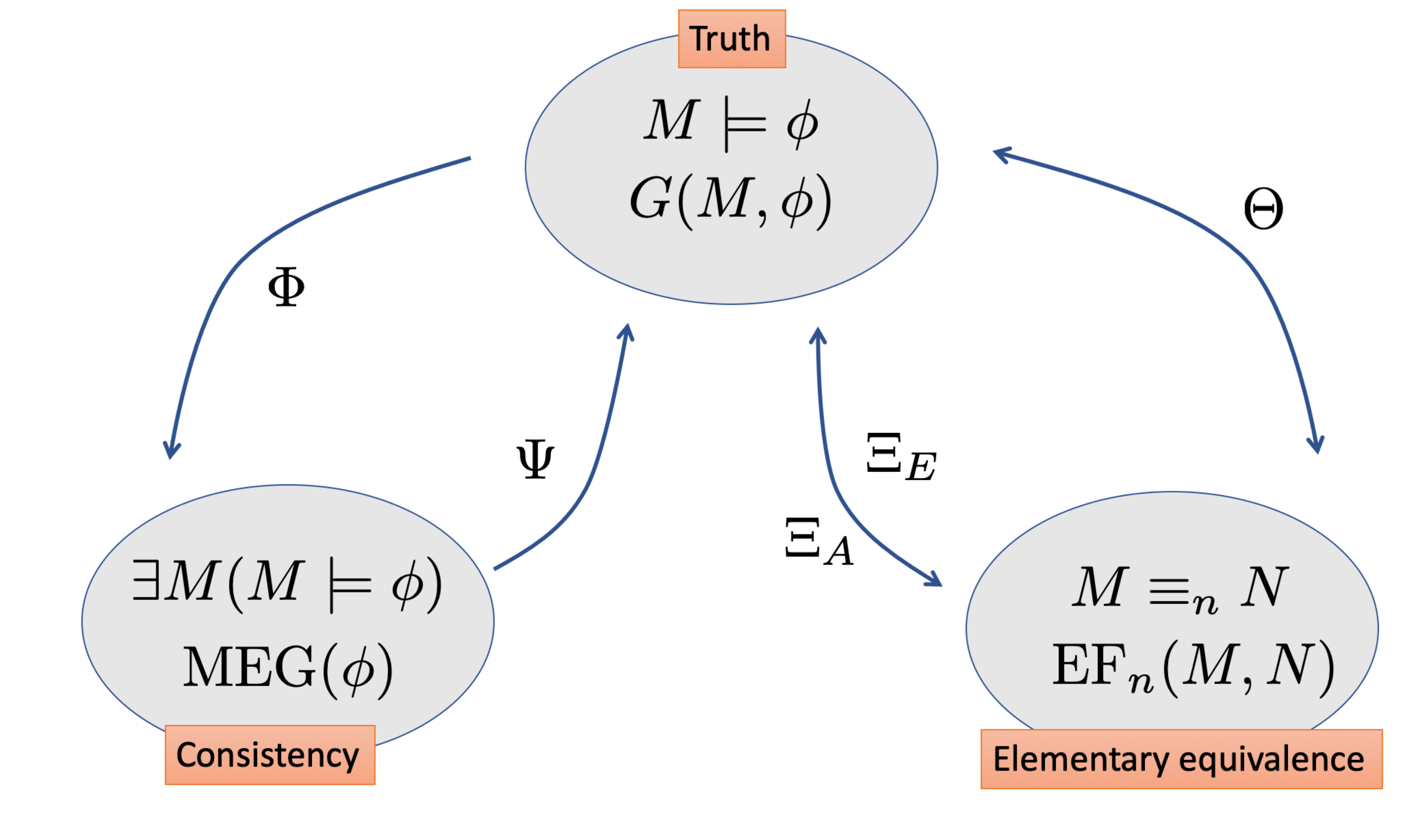}
\end{center}
\caption{The translations of strategies.\label{fig}}\end{figure}

The meanings that these games give to truth, consistency, and elementary equivalence are intertranslatable  in the sense that a strategy in one can be converted to a strategy in the other (see Figure~\ref{fig}). If the former is a winning strategy, so is the latter. We need not traverse via Tarski's Truth Definition. In this sense  Game Theoretical Semantics stands autonomously on its own feet. This is important for those who find Tarski's approach too set-theoretical. However, the set-theoretical treatment is arguably the best representation of even the Game Theoretical Semantics, despite the intuitive appeal of the latter, when one goes into the gritty details of complicated proofs.

The three games, Evaluation Game, Model Existence Game, and EF game, penetrate into the heart of logic where we find the concepts of truth, consistency, provability, definability and isomorphism. Their mathematical interpretations and elaborations lead us into the deepest questions of mathematical logic, especially model theory and set theory. They provide tools for understanding central questions of philosophical logic such as intuitionistic, modal and other non-classical logic. They have become indispensable in theoretical computer science modelling interaction and complexity.

For more details and for references to the original sources we refer to \cite{MR2768176}. Everything in this paper was essentially known already in the fifties and sixties. What is perhaps new is the emphasis on the mutual relationships between the three games, taking the form of a collection of translations between strategies. This aspect has been forcefully emphasised  in \cite{MR3236216}. The author feels that despite the ripe age of the main results, this topic has more to offer than has been discovered so far. An interesting recent development is \cite{DBLP:conf/lics/AbramskyDW17}.

\medskip

\noindent{\bf Notation: } If $X$ is a set, an $X$-assignment is a mapping with a set of variables as its domain and values in $X$. If $s$ is an $X$-assignment and $a\in X$, then $s(a/x)$ is the $X$-assignment which agrees with $s$ except that it gives $x$ the value $a$. If $X$ is clear from the context, we drop it  and talk about assignments only.

\section{Given both a sentence and a model: the Evaluation Game}

We describe a game which gives meaning to the \emph{truth} of a  given sentence in a given model. In its simplest form this game, the Evaluation Game, is the following: Suppose $M$ is a structure for a  vocabulary $L$, $\phi$ is a first order (or, more generally, infinitary) $L$-sentence  and $s$ is an assignment. We allow identity.  Let $\Gamma(s)$ denote the set of all literals i.e. atomic and negated atomic formulas that $s$ satisfies in $M$. As the logical operations allowed in $\phi$ we include $\neg,\wedge,\vee,\forall$ and $\exists$. To make the situation as clear as possible, we assume that $\phi$ is  in  Negation Normal Form, meaning that negation occurs only in front of atomic formulas. The game $G(M,\phi)$ has two players Abelard and Eloise. Intuitively, Eloise defends the proposition that $\phi$ is 
(informally) true in $M$ and Abelard doubts it. All through the game the players are inspecting an $L$-sentence and an assignment $s$. We call the pair $(\psi,s)$ a \emph{position} of the game. In the beginning of the game the position is $(\phi,\emptyset)$. 
The rules are as follows: Suppose the position is $(\psi,s)$.
\begin{enumerate}
\item[(1)] If $\psi$ is a literal, the game ends and Eloise wins if $\psi\in\Gamma(s)$. Otherwise Abelard wins.
\item[(2)] If $\psi$ is $\psi_0\wedge\psi_1$, then Abelard chooses whether the next position is
$(\psi_0,s)$ or $(\psi_1,s)$.
\item[(3)] If $\psi$ is $\psi_0\vee\psi_1$, then Eloise chooses whether the next position is
$(\psi_0,s)$ or $(\psi_1,s)$.
\item[(4)] If $\psi$ is $\forall x\theta$, then Abelard chooses $a\in M$ and the next position is
$(\theta,s(a/x))$.
\item[(5)] If $\psi$ is $\exists x\theta$, then Eloise chooses $a\in M$ and the next position is
$(\theta,s(a/x))$.
\end{enumerate}

 A \emph{strategy} for a player is a set of rules (functions) that describe exactly how that player should choose, depending on how the two players have played at earlier moves. It is a \emph{winning strategy} if the player wins whichever way the opponent plays. The game $G(M,\phi)$ is clearly a determined game i.e. always one of the players has a winning strategy. 
 
 We say that $\phi$ is \emph{true in} $M$ if Eloise has a winning strategy in $G(M,\phi)$. This is the game-theoretical meaning of truth in a model. We can go further and say that the \emph{game}  $G(M,\phi)$  is the \emph{meaning} of $\phi$ in $M$. Here meaning would be a broader concept than the mere truth or falsity of $\phi$. 

The first to formulate the Evaluation Game explicitly was probably J. Hintikka \cite{Hintikka1968-HINLFQ} who later advocated the importance and usefulness of the game forcefully. Hintikka mentions L. Wittgenstein \cite{MR0078292} as an inspiration. Earlier L. Henkin \cite{MR0143691}  formulated  a game theoretic approach to quantifiers pointing out generalizations to very general infinitary and partially ordered quantifiers. Nowadays the Evaluation Game is a standard tool in mathematical, philosophical and computer science logic. 
 
 The game $G(M,\phi)$ reflects the entire syntactical structure of $\phi$ in the sense that the game $G(M,\phi\wedge\psi)$ is intimately related to the two games $G(M,\phi)$ and $G(M,\psi)$, the same with $G(M,\phi\vee\psi)$, $G(M,\exists x\phi)$ and $G(M,\forall x\phi)$. This phenomenon is a manifestation of the broader concept of \emph{compositionality}.
 
If $\phi$ is propositional i.e. has only zero-place relation symbols and no constant or function symbols, and no quantifiers, then only moves (1)-(3) occur in $G(M,\phi)$, and the assignments can be forgotten.  If $\phi$ is universal, the game $G(M,\phi)$ has no moves of type (5). If it is existential, the game has no moves of type (4). If universal-existential, then all type (5) moves come before type (4) moves.
 Furthermore, if we add new logical operations to our logic, such as infinite conjunctions and disjunctions, generalized quantifiers or higher order quantifiers, it is clear how to modify the game $G(M,\phi)$ to accommodate the new logical operations. For example, for $\phi$ in $L_{\infty\omega}$, we modify above (2) and (3) as follows:
 \begin{enumerate}
\item[(2')] If $\psi$ is $\bigwedge_{i\in I}\psi_i$, then Abelard chooses $i\in I$ and   the next position is
$(\psi_i,s)$.
\item[(3')] If $\psi$ is $\bigvee_{i\in I}\psi_i$, then Eloise chooses $i\in I$ and   the next position is
$(\psi_i,s)$.
\end{enumerate}
This extends also to  so-called team semantics \cite{MR2351449}, in which case a position in the game $G(M,\phi)$ is a pair $(\psi,X)$, where $\psi$ is a subformula of $\phi$ and $X$ is a team. Finally, if $M$ is a Kripke-model and $\phi$ a sentence of modal logic, the game $G(M,\phi)$ is entirely similar. The assignments have a singleton domain $\{x_0\}$ and values in the frame of $M$. The moves corresponding to $\Diamond$ and $\Box$ are like (4) and (5):
\begin{enumerate}
\item[(4')] If $\psi$ is $\Box\theta$, then Abelard chooses a node $b$ accessible from $s(x_0)$ and the next position is
$(\theta,s(b/x_0))$.
\item[(5')] If $\psi$ is $\Diamond \theta$, then Eloise chooses a node $b$ accessible from $s(x_0)$ and the next position is
$(\theta,s(b/x_0))$.
\end{enumerate}

The game $G(M,\phi)$ can also reflect the structure of $M$. In fact, the games $G(M\times N,\phi)$, $G(M+N,\phi)$, and $G(\Pi_iM_i/F,\phi)$ are intimately related to the games $G(M,\phi)$, $G(N,\phi)$ and $G(M_i,\phi)$ \cite{MR50:12646}. 
The game $G(M,\phi)$ is also helpful in finding a countable submodel $N$ of $M$ with desired properties. For any strategy $\tau$ of Eloise in $G(M,\phi)$ let $T(M,\tau)$ be the set of countable submodels $N$ of $M$ such that $N$ is closed under the functions of $M$ 
and under $\tau$ i.e. if Abelard plays in (4) always $a\in N$, then also Eloise plays in (5) always $b\in N$. 
Note that if $N\in T(M,\tau)$, then $\tau$ is necessarily  a strategy of Eloise in $G(N,\phi)$. Moreover, if $\tau$ is a winning strategy in $G(M,\phi)$, then it is also a winning strategy in $G(N,\phi)$. The {\bf L\"owenheim-Skolem Theorem} of $L_{\omega_1\omega}$ is essentially the statement that $T(M,\phi)\ne\emptyset$, when $\phi\in L_{\omega_1\omega}$. In the \emph{Cub Game} due to D. Kueker \cite{MR56:15406} there are two players Abelard and Eloise, a set $X$ and a set $T$ of countable subsets of $X$. During the game, which we denote $G_T$, the players choose  alternatingly  elements $a_n\in X$, Abelard being the first to move. After $\omega$ moves a set $\{a_0,a_1,\ldots\}$ has been produced. We say that Eloise is the winner if this set is in $T$, otherwise Abelard is the winner. This game need not be determined. If Eloise has a winning strategy in $G_T$, then $T$ contains a so-called cub (closed unbounded) set of countable subsets of $X$.

Below is a strong form of a L\"owenheim-Skolem Theorem for  $L_{\omega_1\omega}$, based on the Evaluation Game. It shows that the countable submodels for which the given strategy works for Eloise are ``everywhere" (i.e. cub) inside $M$.  The theorem is due to \cite{MR56:15406}.

\begin{theorem}\label{ls}
Suppose $M$ is a model in a countable vocabulary  and $\phi$ is a sentence of $L_{\omega_1\omega}$. There is a mapping $\Upsilon$ such that if $\tau$ is a strategy of Eloise in $G(M,\phi)$, then $\Upsilon(\tau)$ is a strategy of Eloise in $G_{T(M,\phi)}$. If $\tau$ is a winning strategy, then so is $\Upsilon(\tau)$.
\end{theorem}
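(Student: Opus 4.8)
The plan is to let $\Upsilon(\tau)$ be a \emph{bookkeeping strategy} for Eloise. Fix $X=M$ (the domain of $M$) for the Cub game, so that a run of $G_{T(M,\phi)}$ produces a sequence of elements of $M$, Abelard and Eloise moving alternately with Abelard first, and Eloise wins iff the substructure $N$ they generate lies in $T(M,\phi)$. By the discussion preceding the theorem it suffices for Eloise to ensure that $N$ is (i) closed under the functions of $M$ and (ii) closed under $\tau$, i.e. $N\in T(M,\tau)$; and since $T(M,\tau)\subseteq T(M,\phi)$ when $\tau$ is winning, this yields $N\in T(M,\phi)$. So the whole problem reduces to: using one move per round, force the final set to be closed under the functions of $M$ and under $\tau$.

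First I would record the relevant countability facts. Because $\phi\in L_{\omega_1\omega}$, it has only countably many subformulas, and the Evaluation Game $G(M,\phi)$ is well-founded: the formula component of a position strictly descends in the (well-founded) subformula order, so every play is finite. Hence, for any finite $F\subseteq M$, there are only countably many positions $(\psi,s)$ with $\psi$ a subformula of $\phi$ and $s$ an assignment into $F$, and therefore only countably many finite plays of $G(M,\phi)$ in which Abelard uses only elements of $F$ and Eloise follows $\tau$; each such play prescribes finitely many elements of $M$ for Eloise. Likewise, $F$ together with all values $f^M(\bar c)$, for $f$ a function symbol of the vocabulary and $\bar c$ a tuple from $F$, is countable. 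Thus each finite ``state'' of the construction generates only countably many new \emph{obligations}, i.e. elements that must eventually enter $N$.

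The heart of the argument is the dovetailing. Eloise keeps, after round $n$, the finite set $F_n$ of elements played so far (by either player) together with a queue $Q_n$ listing the obligations generated by $F_n$ as above. At her $n$-th move she plays the first element of the current queue not yet played (and an arbitrary element if the queue is momentarily exhausted). Because only finitely many new elements enter per round while the freshly created obligations are merely countable, a standard fair interleaving — enumerate all obligations in order of creation, serve one per round — guarantees that every obligation is discharged at some finite stage. Consequently $N=\bigcup_n F_n$ is closed under the functions of $M$; and if, in $G(N,\phi)$, Abelard only ever plays elements of $N$, then at the stage where those (finitely many) elements had all appeared, $\tau$'s response to the corresponding play was already enqueued, hence lies in $N$. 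So $N$ is closed under $\tau$, giving $N\in T(M,\tau)\subseteq T(M,\phi)$, and Eloise has won. Note that the construction of $\Upsilon(\tau)$ never used that $\tau$ is winning, so $\Upsilon$ is defined on all strategies; winning of $\tau$ enters only through the final inclusion.

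The main — essentially the only — obstacle is the mismatch between the single-element-per-round format of the Cub game and the fact that closure under $\tau$ is not a one-step closure operator: applying $\tau$ to a play that uses elements added late can demand still further elements, so the pool of obligations keeps regenerating. What must be checked carefully is that at every finite stage this regeneration stays countable (which is exactly what the countability facts above provide) and that the scheduling is fair enough to reach every obligation in the limit; both are routine once the bookkeeping is set up. The remaining ingredients — that closure under functions and under $\tau$ places $N$ in $T(M,\tau)$, and that a winning $\tau$ stays winning on $N$ — are quoted verbatim from the text preceding the theorem.
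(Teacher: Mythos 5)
Your proof is correct and follows essentially the same route as the paper, whose own argument is only a one-sentence sketch of exactly this bookkeeping/dovetailing idea: Eloise interleaves her moves so that the limit set is closed under the functions of $M$ and under $\tau$, and then quotes the preceding remarks to conclude that a winning $\tau$ remains winning on the resulting countable submodel. Your added verification of the countability of the obligations (well-foundedness of $L_{\omega_1\omega}$ formulas, countably many $\tau$-plays over a finite pool) and the fairness of the scheduling fills in precisely the details the paper leaves implicit.
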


\begin{proof}(Sketch)
Using bookkeeping Eloise makes sure during the game $G_{T(M,\phi)}$ that her moves $a_{2n+1}\in M$ render the final set $\{a_0,a_1,\ldots\}$ such that it is both the domain of a submodel of $M$ and if Abelard plays his moves of type (4) in $\{a_0,a_1,\ldots\}$ then so does Eloise, using $\tau$, in her moves of type (5). 
\end{proof}
In conclusion, the game $G(M,\phi)$ is a versatile tool for understanding the meaning of a logical sentence $\phi$ in a mathematical structure $M$. Alternative tools, yielding equivalent results, are inductive methods such as the so-called Tarski's Truth Definition and the theory of inductive definitions.

\section{The model is missing: the Model Existence Game}

Here we have a sentence and we ask whether the sentence has a model. Thus this is about \emph{consistency} and its opposite, \emph{contradiction}. In particular, the issue is whether there is some model $M$ such that Eloise can win $G(M,\phi)$? This question is governed by the Model Existence Game $\MEG(\phi)$. 

Suppose  $\phi$ is a first order sentence. As the logical operations allowed in $\phi$ we include $\neg,\wedge,\vee,\forall$ and $\exists$. To make this as simple as possible, we assume again that $\phi$ is  in  Negation Normal Form. The game $\MEG(\phi)$ has two players Abelard and Eloise. Intuitively, Eloise defends the proposition that $\phi$ has a model and Abelard doubts it. Abelard expresses his doubt by asking questions. It may be the case that Eloise has a model at her disposal but she does not want to reveal it, apart from using it to give answers to Abelard's questions, but she may also just pretend that she does. For all we know there may not be any model. Conceivably $\phi$ is a contradiction, but the game should settle this. We let $C=\{c_0,c_1,\ldots,c_n,\ldots\}$ be a set of new distinct constant symbols. Intuitively these are names of elements of the supposed model.  A position of the game is a finite collection $S$ of pairs $(\psi,s)$, where $\psi$ is a subformula of $\phi$ and $s$ is a $C$-assignment. In infinitary logic we may consider also infinite $S$. In the beginning of the game the position is $\{(\phi,\emptyset)\}$. The rules are as follows: Suppose the position is $S$.
\begin{enumerate}
\item If $(\psi_0\wedge\psi_1,s)\in S $, then Abelard may decide that the next position is
$S\cup\{(\psi_0,s)\}$ or $S\cup\{(\psi_1,s)\}$.
\item If $(\psi_0\vee\psi_1,s)\in S$, then Abelard may decide that Eloise has to choose whether the next position is
$S\cup\{(\psi_0,s)\}$ or $S\cup\{(\psi_1,s)\}$.
\item If $(\forall x\theta,s)\in S$, then Abelard may choose $n\in\oN$ and decide that the next position is
$S\cup\{(\theta,s(c_n/x))\}$.
\item If $(\exists x\theta,s)\in S$, then Abelard may decide that Eloise has to choose $c_n$ and the next position is
$S\cup\{(\theta,s(c_n/x))\}$.
\end{enumerate}
Abelard wins if at some point the position $S$ contains both $(\psi,s)$ and $(\neg\psi,s')$, where $\psi$ is an atomic formula and $s(x)=s'(x)$ for all variables $x$ in $\psi$. If Abelard does not win, then the game may continue for infinitely many moves and then Eloise is declared the winner.
Note that Abelard has a lot of control on how the game proceeds as he may focus attention on any formula in the finite set $s$. This is clearly a determined game.

The Model Existence Game is a game-theoretic rendering of the method of Beth Tableaux, with background in Gentzen's natural deduction. It was presented roughly as above, but without explicit mention of a game, for first order logic at about the same time by E. Beth \cite{MR0089151} and J. Hintikka \cite{MR69779}, later in a more abstract form by R. Smullyan \cite{MR0152430}, and for infinitary logic  by M. Makkai \cite{MR0255383}.

The following theorem, essentially due to \cite{MR0089151}, demonstrates how the Model Existence Game and the Evaluation Game can be combined in the somewhat trivial case that a model is not missing but actually known (see Figure~\ref{fig4}).

\begin{figure}
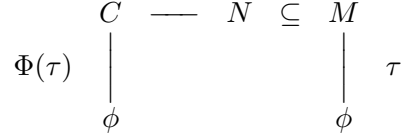

$$\begin{array}{ccccccc}
&C&-\!\!\!-\!\!\!-&N&\subseteq& M&\\
\Phi(\tau)&\bigg|&&&&\bigg|&\tau\\
&\phi&&&&\phi
\end{array}$$
\caption{From model to model existence\label{fig4}.}\end{figure}

\begin{theorem}Suppose the vocabulary of $M$ is countable. There is a mapping $\Phi$  so that if $\tau$ is a strategy of Eloise in $G(M,\phi)$, then $\Phi(\tau)$ is a  strategy of Eloise in $\MEG(\phi)$. 
If $\tau$ is a winning strategy, then so is $\Phi(\tau)$.\end{theorem}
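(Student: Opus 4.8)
The plan is to have Eloise play $\MEG(\phi)$ by secretly running, in parallel, plays of the Evaluation Game $G(M,\phi)$ in which she obeys the given strategy $\tau$, reading her $\MEG(\phi)$-moves off those plays and interpreting each new constant $c_n$ as an honest element of $M$. The one structural point to get right first is that a single play of $G(M,\phi)$ walks down just one branch of the syntax tree of $\phi$, whereas a position $S$ of $\MEG(\phi)$ can simultaneously contain several pairs lying on incomparable branches (over several moves Abelard may split a conjunction into both of its conjuncts, or instantiate a universal quantifier by many constants). So Eloise cannot maintain one running play of $G(M,\phi)$; she must maintain a whole tree of them, one branch per pair occurring in $S$, glued together along the tree by which $\MEG(\phi)$ built $S$ up.

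Concretely, alongside $S$ I would have Eloise keep a finite partial map $g\colon C\to M$ whose domain contains every constant occurring in $S$, and, for each $(\psi,s)\in S$, a designated play $P_{(\psi,s)}$ of $G(M,\phi)$ ending at the position $(\psi,g\circ s)$, where $(g\circ s)(x)=g(s(x))$, in which Eloise has so far followed $\tau$, and such that whenever $(\psi,s)$ entered $S$ as an immediate consequence of some $(\chi,t)\in S$ the play $P_{(\psi,s)}$ is $P_{(\chi,t)}$ with exactly one further move appended. The update rules are the obvious ones. For Abelard's moves at $\wedge$ and $\forall$, append to the relevant $P$ the matching Abelard-move of $G(M,\phi)$ (at $\forall$, if the constant $c_n$ he uses is new put $g(c_n)$ equal to some fixed element of $M$, otherwise use the element $g(c_n)$). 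For Eloise's move at a disjunction, play the disjunct that $\tau$ dictates at the end of the corresponding $P$. For Eloise's move at $\exists x\theta$, let $a\in M$ be the witness $\tau$ dictates at the end of the corresponding $P$, answer with a constant $c_n$ fresh for $S$, and set $g(c_n)=a$. A fresh $c_n$ always exists ($S$ finite, $C$ infinite), and since each move introduces at most one new constant and the game runs for at most $\omega$ steps, the countably many constants in $C$ suffice — this is where countability enters. The recipe makes sense for an arbitrary strategy $\tau$, so it defines the map $\Phi$.

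For the ``winning'' clause I would argue by contradiction. Suppose that when $\tau$ is winning Abelard nevertheless reaches a position $S$ containing $(\psi,s)$ and $(\neg\psi,s')$ with $\psi$ atomic and $s(x)=s'(x)$ for every variable $x$ of $\psi$. By the invariant, $P_{(\psi,s)}$ is a complete, $\tau$-conforming play of $G(M,\phi)$ ending at the literal $(\psi,g\circ s)$; since $\tau$ wins, Eloise wins it, so $M\models\psi[g\circ s]$, and symmetrically $M\models\neg\psi[g\circ s']$. But $s$ and $s'$ agree on the variables of $\psi$ and $g$ is a function, so $g\circ s$ and $g\circ s'$ agree there too, giving $M\models\psi[g\circ s]$ and $M\not\models\psi[g\circ s]$. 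Hence Abelard never reaches a winning position and Eloise wins $\MEG(\phi)$ following $\Phi(\tau)$.

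The only step I expect to require real care is the bookkeeping of the second paragraph: recognising that a $\MEG(\phi)$-position is a tree of evaluation-game positions rather than a single one, stating the invariant precisely, and checking that the history handed to $\tau$ along each branch is a genuine $\tau$-conforming history of $G(M,\phi)$ — in particular that Abelard re-using a constant, or attacking the same formula twice, produces no clash. Once that is pinned down, preservation of the invariant under each of the four move types and the final contradiction argument are routine.
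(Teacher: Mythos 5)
Your proof is correct, and while the underlying idea (simulate each pair in a $\MEG(\phi)$-position by a $\tau$-conforming play of the Evaluation Game, translating constants to elements) is the same as the paper's, your route to it is genuinely different in one respect. The paper first invokes its L\"owenheim--Skolem theorem (Theorem~\ref{ls}) to replace $M$ by a countable submodel $N$ on which $\tau$ still works, precisely so that it can fix \emph{in advance} a surjection $\pi\colon C\to N$ and define the invariant ``every pair in $S$ is a $C$-translation of a $\tau$-position of $G(N,\phi)$.'' You instead build the interpretation $g\colon C\to M$ \emph{on the fly}, extending it only at the constants Abelard actually mentions and answering each existential with a fresh constant named after $\tau$'s witness. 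This makes the detour through the countable submodel unnecessary: the only countability you use is that of $C$ against the $\omega$ moves of the game, so your argument does not actually need the hypothesis that the vocabulary of $M$ is countable, whereas the paper's does (to get the countable $N$ onto which $\pi$ can map $C$). A second, smaller difference is one of bookkeeping rigor: the paper's invariant only records that each pair is a translation of \emph{some} $\tau$-reachable position, and silently re-chooses a witnessing history each time a formula is attacked; your explicit tree of designated plays $P_{(\psi,s)}$, glued along the derivation of $S$, makes the choice canonical and makes it transparent that the history fed to $\tau$ on each branch is a legitimate one. The final contradiction argument (two complete $\tau$-plays ending at $\psi$ and $\neg\psi$ under assignments that agree on the variables of $\psi$, because $g$, respectively $\pi$, is a function) is the same in both proofs.
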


\begin{proof}By Theorem~\ref{ls} there is a countable submodel $N$ of $M$ such that $\tau$ is a strategy of Eloise in $G(N,\phi)$ and if $\tau$ is a winning strategy in $G(M,\phi)$, then it is also a winning strategy in $G(N,\phi)$. Let $\pi:C\to N$ be an onto map. 
Let us say that a pair $(\psi,s)$ is a $\tau$-position if there is there is some sequence of positions  in $G(N,\phi)$, following the rules of $G(N,\phi)$ starting with $(\phi,\emptyset)$, Eloise using $\tau$, which ends at $(\psi,s)$. A \emph{$C$-translation} of the $\tau$-position $(\psi,s)$ is a pair $(\psi,s')$ where $s'$ is a $C$-assignment with 
$\pi(s'(x))=s(x)$. The strategy $\Phi(\tau)$ of Eloise in $\MEG(\phi)$ is to make sure that at all times the position $S$ consists only of $C$-translations of $\tau$-positions.  Let us now check that Eloise can actually follow this strategy: Suppose the position is $S$.
\begin{enumerate}

\item If $(\psi_0\wedge\psi_1,s')\in S $, then Abelard may decide that the next position is
$S\cup\{\psi_0,s'\}$ or $S\cup\{\psi_1,s'\}$. Let us assume he chooses $S\cup\{\psi_0,s')\}$ as the next position. There is a sequence $\alpha$ of moves  in $G(N,\phi)$ in which Eloise uses  $\tau$, which ends at $(\psi_0\wedge\psi_1,s)$, a $C$-translation of which is $(\psi_0\wedge\psi_1,s')$. We continue $\alpha$ by letting Abelard move $\psi_0$. Now $S\cup\{(\psi_0,s')\}$ consists of $C$-translations of $\tau$-positions. 

\item If $(\psi_0\vee\psi_1,s')\in S $, then Eloise can  decide that the next position is
$S\cup\{(\psi_0,s')\}$ or $S\cup\{(\psi_1,s')\}$.  There is a sequence $\alpha$ of moves  in $G(N,\phi)$ in which Eloise uses  $\tau$, which ends at $(\psi_0\vee\psi_1,s)$, a $C$-translation of which is $(\psi_0\vee\psi_1,s')$. We continue $\alpha$ by letting Eloise move whatever $\tau$ tells her to move, say $\psi_0$. Now $S\cup\{(\psi_0,s')\}$ consists of $C$-translations of $\tau$-moves. So we let Eloise choose $S\cup\{(\psi_0,s')\}$ as the next position.

\item If $(\forall x\theta,s')\in S $, then Abelard may choose $c\in C$ and   the next position is
$S\cup\{(\theta, s'(c/x))\}$. There is a sequence $\alpha$ of moves  in $G(N,\phi)$ in which Eloise uses  $\tau$, which ends at $(\forall x\theta,s)$, a $C$-translation of which is $(\forall x\theta,s')$. We continue $\alpha$ by letting Abelard move $(\theta,s(\pi(c)/x))$. Now $S\cup\{(\theta,s'(c/x))\}$ consists of $C$-translations of $\tau$-moves. 

\item If $(\exists x\theta,s)\in S $, then Eloise may choose $a\in M$ and  the next position is
$S\cup\{\theta\langle s(a/x)\rangle\}$.  There is a sequence $\alpha$ of moves  in $G(N,\phi)$ in which Eloise uses  $\tau$, which ends at $(\exists x\theta,s)$, a $C$-translation of which is $(\exists x\theta,s')$. We continue $\alpha$ by letting Eloise use $\tau$ to move $(\theta,s(a/x))$. Let $a=\pi(c)$. Now $S\cup\{\theta\langle s'(c/x)\rangle\}$ consists of $C$-translations of $\tau$-moves. We let Eloise choose $S\cup\{\theta\langle s'(c/x)\rangle\}$ as the next position.
\end{enumerate}

If Eloise follows this strategy and $\tau$ happens to be a winning strategy, then she wins because if at some point there are $(\psi,s')$ and $(\neg\psi,t')$ in the set $S$, such that $\psi$ is atomic and $s'(x)=t'(x)$ for variables $x$ in $\psi$, then these would be $C$-translations of $(\psi,s)$ and $(\neg\psi,t)$, where $s(x)=t(x)$ for $x$ in $\psi$, respectively, and $s$ (and $t$) would satisfy both $\psi$ and $\neg\psi$ in $N$ because $\tau$ is a winning strategy, a contradiction.
\end{proof}

The following theorem is more interesting because here we do not have a model to start with but we have to construct a model. In this sense this is reminiscent of G\"odel's Completeness Theorem. The theorem demonstrates how the Model Existence Game can yield a model which then has a good fit with the Evaluation Game (see Figure~\ref{fig5}).
The theorem goes back  to \cite{MR0089151}.

\begin{figure}
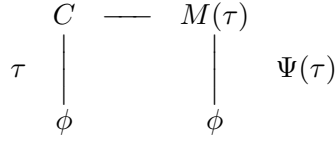

$$\begin{array}{rcccl}
&C&-\!\!\!-\!\!\!-& M(\tau)&\\
\tau&\bigg|&&\bigg|&\Psi(\tau)\\
&\phi&&\phi
\end{array}$$
\caption{From model existence to a model.\label{fig5}}\end{figure}

\begin{theorem} There are a mapping $\Psi$ and   an ``enumeration strategy" $\sigma_0$ of Abelard in $\MEG(\phi)$ such that  if $\tau$ is any  strategy of Eloise in $\MEG(\phi)$,  playing $\tau$ against $\sigma_0$ determines a unique model $M(\tau)$ and a strategy $\Psi(\tau)$ of Eloise in $G(M(\tau),\phi)$.  If $\tau$ is winning, then so is $\Psi(\tau)$. 
\end{theorem}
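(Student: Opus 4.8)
The plan is to take $\sigma_0$ to be a \emph{bookkeeping} strategy for Abelard that systematically develops every pair that ever occurs in a position, so that whenever Eloise survives the resulting play --- which she does whenever $\tau$ is winning --- the limit position $S_\infty=\bigcup_n S_n$ is downward saturated, i.e.\ a Hintikka set, from which a term model $M(\tau)$ can be read off; the strategy $\Psi(\tau)$ will then have Eloise play $G(M(\tau),\phi)$ by \emph{shadowing} the recorded play of $\MEG(\phi)$, copying on each of her moves the choice that $\tau$ made at the matching node. Since Abelard plays the fixed $\sigma_0$ and Eloise plays the fixed $\tau$, the play, hence $S_\infty$, hence $M(\tau)$, is uniquely determined, which is the uniqueness clause of the statement.

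First I would pin down $\sigma_0$. Fix an enumeration of ``tasks'', a task being one of: given $(\psi_0\wedge\psi_1,s)$ in the current position, adjoin $(\psi_0,s)$, resp.\ adjoin $(\psi_1,s)$; given $(\psi_0\vee\psi_1,s)$, force Eloise to choose a disjunct; given $(\forall x\theta,s)$ and $n\in\oN$, adjoin $(\theta,s(c_n/x))$; given $(\exists x\theta,s)$, force Eloise to pick a witness; together with the routine Hintikka tasks for identity and function symbols --- reflexivity and congruence of the equality atoms, and naming each closed $C$-term occurring by some constant (these need the standard enrichment of $\MEG$ with the corresponding moves) --- which I suppress below. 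As every $S_n$ is finite and each move introduces only finitely many new pairs and constants, a standard priority/dovetailing argument lets Abelard at stage $n$ attend to the least as-yet-uncompleted task that has become relevant (its triggering pair being in the current position). This defines a legal strategy $\sigma_0$ of Abelard, \emph{independent of how Eloise plays}, such that in every infinite play against $\sigma_0$ each relevant task is eventually completed; in particular $(\forall x\theta,s)\in S_\infty$ forces $(\theta,s(c_n/x))\in S_\infty$ for every $n$. I expect this to be the main obstacle: $\sigma_0$ must be one Eloise-independent scheme that nonetheless forces full downward saturation of $S_\infty$, the delicate point being that the instantiations of the universal clauses must keep up with constants Eloise may introduce arbitrarily late, and, with identity present, that enough equality pairs are generated for the interpretation below to make sense.

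Next I would build $M(\tau)$, assuming $\tau$ wins the play against $\sigma_0$ --- automatic when $\tau$ is a winning strategy, since then Abelard never clashes and the play is infinite with Eloise winning. Then $S_\infty$ is defined and has no clash, i.e.\ no $(\psi,s),(\neg\psi,s')$ with $\psi$ atomic and $s,s'$ agreeing on the variables of $\psi$. Let $\sim$ be the congruence on closed $C$-terms generated by $\{(t_1,t_2):(t_1=t_2,s)\in S_\infty\}$ --- well behaved because of the identity tasks; take the universe of $M(\tau)$ to be $C/{\sim}$ (every closed term being $\sim$-named by a constant), interpret function symbols by term formation, and declare $R^{M(\tau)}([\bar c])$ iff $(R(\bar u),s)\in S_\infty$ for some $\bar u,s$ with $s(u_i)\sim c_i$; no-clash makes this well defined. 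If instead $\tau$ loses the play against $\sigma_0$, set $M(\tau)$ to be a fixed one-point structure.

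Finally I would let $\Psi(\tau)$ be the strategy for Eloise in $G(M(\tau),\phi)$ maintaining the invariant that the current position $(\psi,\bar s)$, with $\bar s$ an $M(\tau)$-assignment, is \emph{lifted} by some $(\psi,s)\in S_\infty$ with $[s(x)]=\bar s(x)$ for the relevant $x$, realized as a concrete node of the recorded play; this holds at the start for $(\phi,\emptyset)$. At a $\vee$-move Eloise copies the disjunct $\tau$ chose at the lifting node; at an $\exists$-move she plays $[c_n]$ for the witness $c_n$ that $\tau$ supplied there; in both cases the new position has a lift, by saturation of $S_\infty$. At a $\wedge$-move or a $\forall$-move Abelard chooses, but since $\sigma_0$ had forced \emph{both} conjuncts, resp.\ \emph{all} instantiations, whichever branch Abelard takes --- choosing, in the $\forall$ case, a constant representative of his element --- again has a lift in $S_\infty$, so the invariant persists. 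When the play reaches a literal $(\psi,\bar s)$, its lift $(\psi,s)\in S_\infty$ together with the definition of $M(\tau)$ and no-clash give that $\bar s$ satisfies $\psi$ in $M(\tau)$, so Eloise wins. Hence $\Psi(\tau)$ is winning whenever $\tau$ wins the play against $\sigma_0$, in particular whenever $\tau$ is a winning strategy of Eloise in $\MEG(\phi)$, in which case also $M(\tau)\models\phi$ --- the sense in which this parallels the Completeness Theorem.
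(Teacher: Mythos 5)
Your proposal is correct and follows essentially the same route as the paper's proof: a dovetailing ``enumeration strategy'' for Abelard, the uniquely determined limit set $S_\infty$ of the play against $\tau$, a term model on the constants, and a shadowing strategy for Eloise in $G(M(\tau),\phi)$ maintaining the invariant that the current position lifts to $S_\infty$. The only difference is that you additionally treat identity and function symbols via a quotient by a generated congruence, whereas the paper explicitly restricts to a relational vocabulary without identity to simplify the presentation.
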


\begin{proof}
 To make the presentation a little easier, we assume $\phi$ has a relational vocabulary and does not contain the identity symbol. Let $\sigma_0$ be the following  ``enumeration strategy" of Abelard in $\MEG(\phi)$: During the game Abelard makes sure by means of careful bookkeeping that if $S$ is the position, then:
\begin{enumerate}
\item If $(\psi_0\wedge\psi_1,s)\in S $, then during the game he will at some position $S'\supseteq S$ decide that the next position is
$S'\cup\{(\psi_0,s)\}$ and at some further position $S''\supseteq S'$ he will decide that the next position is
$S''\cup\{(\psi_1,s)\}$.
\item If $(\psi_0\vee\psi_1,s)\in S$, then at some position $S'\supseteq S$ Abelard asks Eloise  to choose whether the next position is
$S'\cup\{(\psi_0,s)\}$ or $S'\cup\{(\psi_1,s)\}$.
\item If $(\forall x\theta,s)\in S$, then  for all $n$ during the game he will at some position $S'\supseteq S$ decide that the next position is
$S'\cup\{(\theta,s(c_n/x)\}$.
\item If $(\exists x\theta,s)\in S$, then at some position $S'\supseteq S$ Abelard will ask Eloise to choose $n$ after which the next position is 
$S'\cup\{(\theta,s(c_n/x))\}$.
\end{enumerate}
Let us play $\MEG(\phi)$ while Abelard uses this strategy and Eloise plays $\tau$. 
 Let $$\mathcal{S}=\langle S_n: n<\omega\rangle$$ be the infinite sequence of positions during this play. Since the strategies of the players have been fixed, the sequence $\mathcal{S}$ is uniquely determined. Note that  $S_n\subseteq S_{n+1}$ for all $n$. Let $\Gamma$ be the union of all the positions in $\mathcal{S}$.
We build a model $M=M(\tau)$ as follows: The domain of the model is $\{c_n : n\in\oN\}$. If $R$ is a relation symbol, then we let $R(c_{n_0},\ldots,c_{n_k})$ hold in $M$  if $(R(x_{n_0},\ldots,x_{n_k}),s)\in\Gamma$ for some $s$ such that  $s(x_i)=c_i$ for $i=n_0,\ldots,n_k$. The  strategy $\Psi(\tau)$ of Eloise in $G(M,\phi)$ is the following: She makes sure that if the position in $G(M,\phi)$ is $(\psi,s)$, then $(\psi,s)\in \Gamma$.  Let us see that she can follow the strategy throughout the game: 
\begin{enumerate}
\item If $\psi$ is $\psi_0\wedge\psi_1$, then Abelard chooses whether the next position is
$(\psi_0,s)$ or $(\psi_1,s)$. Suppose he chooses $(\psi_1,s)$. Now, $(\psi_0\wedge\psi_1,s)$ occurs in a position $S$ during the game, because $(\psi_0\wedge\psi_1,s)\in \Gamma$. According to how $\sigma_0$ is defined, Abelard has at some later position $S'\supseteq S$ of the game decided that the next position is $S'\cup\{(\psi_1,s)\}$. Hence $(\psi_1,s)\in\Gamma$.

\item If $\psi$ is $\psi_0\vee\psi_1$, then Eloise should choose whether the next position is
$(\psi_0,s)$ or $(\psi_1,s)$. We know $(\psi_0\vee\psi_1,s)\in S$ for some position $S$ during the game, because $(\psi_0\vee\psi_1,s)\in\Gamma$. By how $\sigma_0$ was defined, Abelard has at some later position $S'\supseteq S$  asked Eloise to choose between $(\psi_0,s)$ and $(\psi_1,s)$. The strategy $\tau$ has directed Eloise to choose, say $(\psi_0,s)$. Thus $(\psi_0,s)\in\Gamma$ and she can safely play $(\psi_0,s)$ in $G(M,\phi)$.

\item If $\psi$ is $\forall x\theta$, then Abelard chooses $a\in M$ and the next position is
$(\theta,s(a/x))$. Suppose he chooses $a=c_n$. Now, $(\forall x\theta,s)$ occurs in a position $S$ during the game, because $(\forall x\theta,s)\in \Gamma$. According to how $\sigma_0$ is defined, Abelard has at some later position $S'\supseteq S$ of the game decided that the next position is $S'\cup\{(\theta,s(c_n/x))\}$. 
Hence $(\theta,s(a/x))\in \Gamma$.

\item If $\psi$ is $\exists x\theta$, then Eloise should choose for which $n$ the next position is
$(\theta,s(c_n/x))$. We know $(\exists x\theta,s)\in S$ for some position $S$ during the game, because $(\exists x\theta,s)\in\Gamma$. By how $\sigma_0$ was defined, Abelard has at some later position $S'\supseteq S$  asked Eloise to choose $n$ for which the next position would be $S'\cup\{(\theta,s(c_n/x))\}$. The strategy $\tau$ has directed Eloise to indeed choose an $n$ leading to the new position $S'\cup\{(\theta,s(c_n/x))\}$. Thus $(\theta,s(c_n/x))\in\Gamma$ and she can safely play $(\theta,s(c_n/x))$ in $G(M,\phi)$.
\end{enumerate}

Suppose now that $\tau$ is a winning strategy of Eloise. We show that then $\Psi(\tau)$ is a winning strategy as well. Suppose the game $G(M,\phi)$ ends at a position $(\psi,s)$ where $\psi$ is atomic. Then $(\psi,s)$ is in $\Gamma$ whence $s$ satisfies $\psi$ in $M$.  Suppose, on the other hand, the game $G(M,\phi)$ ends at a position $(\neg\psi,s)$ where $\psi$ is atomic. Then $(\neg\psi,s)$ is in $S_n$ for some $n$. It suffices to show that $s$ does not satisfy $\psi$  in $M$. Suppose it does. Then there is $(\psi,s')\in S_m$ for some $m$ and some $s'$ such that $s(x)=s'(x)$ for variables $x$ in $\psi$. Then $S_{n+m}$ has both $(\neg\psi,s)$ and $(\psi,s')$, contrary to the assumption that $\tau$ is a winning strategy of Eloise.
\end{proof}

Let us suppose we investigate $\phi$ using the Model Existence Game and find out that Abelard has a winning strategy in $\MEG(\phi)$. We can form a tree, a Beth Tableaux, of all the positions when Abelard plays his winning strategy and we stop playing as soon as Abelard has won. Every branch of the tree is finite and ends in a position which includes a contradiction. We can make the tree finite by looking at the game more carefully. We can then view this tree as a kind of proof of $\neg\phi$. In this sense the Model Existence Game builds a bridge between proof theory and model theory. Strategies of Abelard direct us to proof theory, while strategies of Eloise direct us to model theory.

A winning strategy of Eloise in $\MEG(\phi)$ can be conveniently given in the form of a so-called \emph{consistency property}, which is just a set of finite sets  of sentences satisfying conditions which essentially code a winning strategy for Eloise in $\MEG(\phi)$. Sometimes it is more convenient to use a consistency property than  Model Existence Game. But as far as strategies of Eloise are concerned, the two are one and the same thing. Consistency properties have been successfully used to prove interpolation and preservations results in model theory, especially infinitary model theory \cite{MR0255383}.

Apart from first order and infinitary logic, the Model Existence Game can be used in the proof theory and model theory of  logic with generalized quantifiers and higher order logic. In the infinitary logic  $L_{\kappa\lambda}$, $\lambda>\omega$, the Model Existence Game yields so-called \emph{chain models}, rather than real models. In the case of generalized quantifiers, such as the quantifier ``there exists uncountably many", the Model Existence Game yields so-called weak models, which have to be transformed to real models by a model theoretic argument \cite{MR41:8217}. In the case of higher order logics the Model Existence Game yields so-called \emph{general models}, where the higher order variables range over a set of subsets and relations rather than over all subsets and relations \cite{MR36188}. The Model Existence Game can be used also in propositional and modal logic.

\section{The sentence is missing: The EF game}

In the EF game we have a model but no sentence. To indicate what kind of sentence we are looking for we have two models. The sentence is supposed to be true in one but false in the other. It may be that no such sentence can be found. Then the models are \emph{elementarily equivalent}. We describe a game in which strategies of one player track possibilities for elementary equivalence and the strategies of the other player track possibilities for a separating sentence.
In its simplest form this game, the EF-game (a.k.a. Ehrenfeucht-Fra\"\i ss\'e game, the back-and-forth game or the model comparison game), introduced by A. Ehrenfeucht \cite{MR23:A3666}, is the following: Suppose  $\ma$ and $\mb$ are two structures for the same vocabulary $L$.

The game $\ef_m(\ma,\mb)$ has two players Abelard and Eloise and $m$ moves. A position of the game is a  set 
\begin{equation}\label{position}
s=\{(a_0,b_0),\ldots, (a_{n-1},b_{n-1})\}
\end{equation} of pairs of elements such that  the $a_i$ are from $A$ and the $b_i$ are from $B$, and $n\le m$. In the beginning of the game the position is $\emptyset$. The rules are as follows: Suppose the position is $s$.
\begin{enumerate}
\item Abelard may choose some $a_n\in A$. Then Eloise chooses $b_n\in B$ and the next position is $s\cup\{(a_n,b_n)\}$.
\item Abelard may choose some $b_n\in B$. Then Eloise chooses $a_n\in A$ and the next position is $s\cup\{(a_n,b_n)\}$.
\end{enumerate}
Abelard wins if during the game the position  
(\ref{position}) is such that  $(a_0,\ldots,a_{n-1})$ satisfies some literal in $\ma$ but $(b_0,\ldots,b_{n-1})$ does not satisfy the corresponding literal in $\mb$. If Abelard does not win, then  Eloise is declared the winner. 

Intuitively, Eloise defends the proposition that $\ma$ and $\mb$ are very similar in the sense that isomorphic structures are similar (but $\ma$ and $\mb$ need not be actually isomorphic). Abelard doubts this similarity. Abelard expresses his doubt by asking where would the purported isomorphism map this or that element.

An extremely simple example is two  models of size $\ge m$ in the empty vocabulary. Then Eloise's winning strategy is based on her playing different elements when Abelard does the same. A slightly less trivial example is two finite linear orders of size $\ge 2^m$. 

Note that this game is determined. Moreover,
if Eloise  knows an isomorphism $f:\ma\to \mb$ she can respond by playing always so that $b_n=f(a_n)$.

\begin{figure}
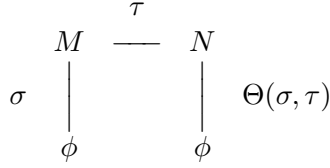

$$\begin{array}{ccccc}
&&\tau&&\\
&M&-\!\!\!-\!\!\!-&N&\\
\sigma&\bigg|&&\bigg|&\Theta(\sigma,\tau)\\
&\phi&&\phi
\end{array}$$
\caption{\label{fig1}Interaction of Evaluation Game and EF game.}\end{figure}

The following theorem demonstrates how the EF game and the Evaluation Game interact when we actually do have a sentence (see Figure~\ref{fig1}). The theorem is due to \cite{MR23:A3666}.

\begin{theorem} There is a function $\Theta$ such that if $\tau $ is a   strategy of Eloise in $\ef_m(\ma,\mb)$, 
 $\phi$ is an $L_{\infty\omega}$-sentence of quantifier rank $\le m$, and $\sigma$ is a  strategy of Eloise in $G(M,\phi)$, then $\Theta(\sigma,\tau)$ is a strategy of Eloise in  $G(N,\phi)$. If  $\tau$ and $\sigma$ are winning strategies, then so is $\Theta(\sigma,\tau)$.
\end{theorem}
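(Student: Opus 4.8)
The plan is to have Eloise play $G(N,\phi)$ by running, invisibly and in parallel, a play of $G(M,\phi)$ in which she follows $\sigma$ and a play of $\ef_m(\ma,\mb)$ in which she follows $\tau$, the two auxiliary plays being driven by the moves made in $G(N,\phi)$ together with the advice of $\sigma$ and $\tau$. Write $a_0,a_1,\dots$ for the successive choices made at the quantifier moves of $G(M,\phi)$ and $b_0,b_1,\dots$ for the corresponding choices in $G(N,\phi)$; after $k$ quantifier moves these will form exactly the position $\{(a_0,b_0),\dots,(a_{k-1},b_{k-1})\}$ reached in $\ef_m(\ma,\mb)$ under $\tau$. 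In addition Eloise keeps a map $\rho$ sending each currently assigned variable to the index of the quantifier move that last introduced it. The invariant I would maintain is: whenever $G(N,\phi)$ is at position $(\psi,t)$, the parallel play of $G(M,\phi)$ (with Eloise obeying $\sigma$) is at $(\psi,s)$ with $\mathrm{dom}(s)=\mathrm{dom}(t)$, and $s(x)=a_{\rho(x)}$, $t(x)=b_{\rho(x)}$ for every $x\in\mathrm{dom}(s)$. Because along every branch of $\phi$ the number of nested quantifiers is at most the quantifier rank, which is $\le m$, at most $m$ quantifier moves ever occur, so the $m$-move EF play never runs out.

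Next I would check that Eloise can maintain the invariant through each rule, which is exactly the definition of $\Theta(\sigma,\tau)$. At a position $(\psi_0\wedge\psi_1,t)$, or $(\bigwedge_{i\in I}\psi_i,t)$ in the infinitary case, Abelard's choice of component is copied verbatim into $G(M,\phi)$; at $(\psi_0\vee\psi_1,t)$, or $(\bigvee_{i\in I}\psi_i,t)$, Eloise consults $\sigma$ at the parallel position $(\psi_0\vee\psi_1,s)$, obtains an index, and plays it in both games. None of these moves affects the EF play or $\rho$. At $(\forall x\theta,t)$, when Abelard picks $b\in N$ Eloise feeds $b$ to $\tau$ as Abelard's move in $\ef_m(\ma,\mb)$, lets $\tau$ answer with some $a\in M$, and plays that $a$ as Abelard's move in $G(M,\phi)$; the new positions are $(\theta,s(a/x))$ and $(\theta,t(b/x))$, the EF play is extended by $(a,b)$, and $\rho$ is updated to send $x$ to the new index, so the invariant persists. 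At $(\exists x\theta,t)$, Eloise first consults $\sigma$ at $(\exists x\theta,s)$, obtains some $a\in M$, advances $G(M,\phi)$ to $(\theta,s(a/x))$, then feeds $a$ to $\tau$ as Abelard's move in $\ef_m(\ma,\mb)$, lets $\tau$ answer with some $b\in N$, plays that $b$ in $G(N,\phi)$, extends the EF play by $(a,b)$ and updates $\rho$; again the invariant persists. In both quantifier cases it is the same strategy $\tau$ that is invoked, the point being precisely that a strategy of Eloise in $\ef_m(\ma,\mb)$ answers an Abelard move on either side.

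To conclude, suppose $\sigma$ and $\tau$ are winning. Since $\phi\in L_{\infty\omega}$ has a well-founded syntax tree, every play of $G(N,\phi)$ is finite and ends at some $(\psi,t)$ with $\psi$ a literal; by the invariant the parallel play of $G(M,\phi)$ ends at $(\psi,s)$. As $\sigma$ is winning, $s$ satisfies $\psi$ in $M$; replacing each variable $x$ occurring in $\psi$ by the $\rho(x)$-th coordinate, this says that the tuple $(a_0,\dots,a_{k-1})$ satisfies the resulting literal in $M$. As $\tau$ is winning and this tuple together with $(b_0,\dots,b_{k-1})$ is the position reached in $\ef_m(\ma,\mb)$ under $\tau$, Abelard has not won there, so the two tuples satisfy exactly the same literals; hence $(b_0,\dots,b_{k-1})$ satisfies that literal in $N$, i.e. $t$ satisfies $\psi$ in $N$. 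Thus $\Theta(\sigma,\tau)$ is winning.

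The step I expect to require the most care is the bookkeeping tying variables to coordinates of the EF position: keeping $\rho$ coherent when a variable is requantified (so that $s$ and $t$ are overwritten while a fresh pair is nevertheless appended to the EF play), and checking that the number of quantifier moves along any branch, hence the number of EF moves actually consumed, never exceeds $m$. Everything else is a routine structural induction on $\phi$, in the same spirit as the verifications in the two preceding theorems.
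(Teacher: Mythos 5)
Your proposal is correct and follows essentially the same route as the paper: Eloise plays $G(N,\phi)$ by shadowing a $\sigma$-play of $G(M,\phi)$ and a $\tau$-play of $\ef_m(M,N)$, maintaining the invariant that the two evaluation-game positions carry the same subformula and assignments matched through the current EF position, and concluding at the literals via the EF winning condition. The extra bookkeeping you flag (the map $\rho$ for requantified variables and the check that at most $m$ EF moves are consumed) is a detail the paper leaves implicit, but it does not change the argument.
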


\begin{proof}
We call a position of the game $\ef_m(\ma,\mb)$ a $\tau$-position if it arises while Eloise is playing $\tau$. Suppose $\phi$ is a sentence of quantifier rank $\le m$  and Eloise has a  strategy $\sigma$ in $G(M,\phi)$.  We call a position of the game $G(M,\phi)$ a $\sigma$-position, if it arises while Eloise is playing $\sigma$. We use $\tau$ and $\sigma$ to construct a  strategy of Eloise in  $G(N,\phi)$. 
 If the position of the game $G(N,\phi)$ is $(\psi,s)$, the strategy $\eta=\Theta(\sigma,\tau)$ of Eloise is to play simultaneously with $G(N,\phi)$ the game $\ef_m(\ma,\mb)$ and the game $G(M,\phi)$ and make sure that if $$\pi=\{(a_0,b_0),\ldots, (a_{n-1},b_{n-1})\}$$ is the current $\tau$-position in $\ef_m(\ma,\mb)$ and $s(x)=\pi(s'(x))$ for all $x$ in the domain of $s$, then $(\psi,s')$ is the current $\sigma$-position in $G(M,\phi)$ (see Figure~\ref{fig2}).

Let us check that it is possible for Eloise to play this strategy:

\begin{enumerate}

\item If the position is $(\psi,s)$ where $\psi$ is a literal, the game ends. 

\item If the position is $(\psi,s)$ where is $\psi$ is $\bigwedge_i\phi_i$, then Abelard chooses $i$ and  the next position is
$(\psi_i,s)$. Whichever he chooses, we let Abelard make the respective move $(\psi_i,s')$ in $G(M,\phi)$. 

\item If the position is $(\psi,s)$ where is $\psi$ is $\bigvee_i\phi_i$, then Eloise  chooses $i$ as follows. 
Since $(\psi,s')$ is a $\sigma$-position, the strategy $\sigma$ tells Eloise which of 
$(\psi_i,s')$ to play in $G(M,\phi)$. Then Eloise plays the respective $(\psi_i,s)$ in $G(N,\phi)$.

\begin{figure}[h!]
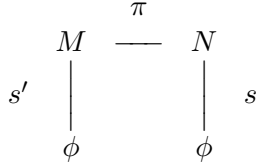

$$\begin{array}{ccccc}
&&\pi&&\\
&M&-\!\!\!-\!\!\!-&N&\\
s'&\bigg|&&\bigg|&s\\
&\phi&&\phi
\end{array}$$
\caption{The strategy $\Theta(\sigma,\tau)$\label{fig2},}\end{figure}
\item If the position  $(\psi,s)$ is $(\forall x\theta,s)$, then Abelard chooses $b_n\in N$ and the next position is
$(\theta,t)$, $t=s(b_n/x)$. We continue the game $\ef_m(\ma,\mb)$ from the $\tau$-position
$\{(a_0,b_0),\ldots, (a_{n-1},b_{n-1})\}$ letting Abe\-lard play $b_n$. The strategy $\tau$ tells Eloise to choose $a_n\in M$ so that 
\begin{equation}\label{ef1}
\pi'=\{(a_0,b_0),\ldots, (a_{n},b_{n})\}
\end{equation} is again a $\tau$-position. Now we continue the game $G(M,\phi)$ from position $(\forall x\theta,s')$ by letting Abelard play $a_n$. We reach the position $(\theta,t')$, $t'=s'(a_n/x)$, which is still a $\sigma$-position, and we have $\pi'(t'(y))=t(y)$ for all $y$ in the domain of $t'$.  
\item The position  $(\psi,s)$ is $(\exists x\theta,s)$. Now we continue the game $G(M,\phi)$ from position $(\exists x\theta,s')$ by letting Eloise play, according to $\sigma$, an element $a_n$ and we reach a new $\sigma$-position $(\theta,t')$, $t'=s'(a_n/x)$. We continue the game $\ef_m(\ma,\mb)$ from the $\tau$-position
$\{(a_0,b_0),\ldots, (a_{n-1},b_{n-1})\}$ letting Abe\-lard play $a_n$. The strategy $\tau$ tells Eloise to choose $b_n\in N$ so that (\ref{ef1}) is again a $\tau$-position.  We reach the position $(\theta,t)$, $t=s(b_n/x)$, and we have $t(y)=\pi(t'(y))$ for all $y$ in the domain of $t$.

\end{enumerate}

If $\sigma$ is a winning strategy and the game $G(N,\phi)$ ends in the position  $(\psi,s)$, where $\psi$ is a literal, then  Eloise wins because then $(\psi,s')$ is a $\sigma$-position meaning that $s'$ satisfies the literal $\psi$  in $M$, and $\tau$ being a winning strategy this means that $s$ satisfies the literal $\psi$ in $N$. 
\end{proof}

Just as the Model Existence Game can yield a model, the EF game can produce a sentence. The next theorem shows how this arises from co-operation of the EF game and the Evaluation Game (see Figure~\ref{fig3}). The theorem is due to \cite{MR23:A3666}. The emphasis on this kind of translations between strategies occurs already in \cite{MR3236216}.

\begin{figure}
$$\begin{array}{ccccc}
&&\tau&&\\
&M&-\!\!\!-\!\!\!-&N&\\
\Xi_E(\tau)&\bigg|&&\bigg|&\Xi_A(\tau)\\
&\phi&&\neg\phi
\end{array}$$
\caption{\label{fig3}}\end{figure}

\begin{theorem}\label{sent} There 
  is a sentence $\phi\in L_{\infty\omega}$ of quantifier rank $\le m$ and  mappings $\Xi_E$ and $\Xi_A$ such that if $\tau$ is a  strategy of Abelard in $\ef_m(M,N)$, then $\Xi_E(\tau)$ is a strategy of Eloise in $G(M,\phi)$, and   
   $\Xi_A(\tau)$ is a strategy of Abelard in  $G(N,\phi)$. If $\tau$ is a winning strategy, then $\Xi_E(\tau)$ and $\Xi_A(\tau)$ are winning strategies. If $L$ is finite and relational, the sentence $\phi$ is logically equivalent to a first order sentence of quantifier rank $\le m$.
\end{theorem}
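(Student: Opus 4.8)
The plan is to take $\phi$ to be the $m$-th Hintikka (Scott) formula of $M$ and to read both strategies off of it. For a tuple $\bar a=(a_0,\dots,a_{n-1})$ from $M$ define $L_{\infty\omega}$-formulas $\phi^{(k)}_{\bar a}$ in the variables $x_0,\dots,x_{n-1}$ by recursion on $k$: let $\phi^{(0)}_{\bar a}$ be the conjunction of all literals $\lambda(x_0,\dots,x_{n-1})$ with $M\models\lambda(\bar a)$, and set
$$\phi^{(k+1)}_{\bar a}=\phi^{(0)}_{\bar a}\wedge\bigwedge_{a\in M}\exists x_n\,\phi^{(k)}_{\bar a a}\wedge\forall x_n\bigvee_{a\in M}\phi^{(k)}_{\bar a a}.$$
Then $\phi^{(k)}_{\bar a}$ has quantifier rank $\le k$, so $\phi:=\phi^{(m)}_{\langle\rangle}$ is a sentence of quantifier rank $\le m$; note that $\phi$ depends only on $M$ and $m$, not on $\tau$. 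First I would record the standard Fra\"\i ss\'e--Hintikka lemma, proved by induction on $k$ from the recursive description of $\ef$: for length-$n$ tuples $\bar a$ from $M$ and $\bar b$ from $N$, Eloise wins the $k$ remaining moves of the EF game played from the position $\{(a_i,b_i):i<n\}$ if and only if $N\models\phi^{(k)}_{\bar a}[\bar b]$; equivalently, Abelard wins that play iff $N\not\models\phi^{(k)}_{\bar a}[\bar b]$. In particular $M\models\phi$ (Eloise wins $\ef_m(M,M)$), and $N\not\models\phi$ whenever Abelard has a winning strategy in $\ef_m(M,N)$.

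Second, I would define $\Xi_E$, which needs no use of $\tau$: let $\Xi_E(\tau)$ be the canonical strategy of Eloise in $G(M,\phi)$ that maintains the invariant that whenever the position is $(\phi^{(k)}_{\bar a},s)$ and $\bar c=(s(x_0),\dots,s(x_{n-1}))$, then $M\models\phi^{(k)}_{\bar a}[\bar c]$ (initially $k=m$ and $\bar a=\bar c=\langle\rangle$, and $M\models\phi$). This is preserved under each move: if Abelard descends into a literal of $\phi^{(0)}_{\bar a}$ then $\bar c$ satisfies it, since $M\models\phi^{(0)}_{\bar a}[\bar c]$ forces $\bar a$ and $\bar c$ to agree on all literals; if he descends into a conjunct $\exists x_n\,\phi^{(k)}_{\bar a a}$, Eloise picks a witness $c$ with $M\models\phi^{(k)}_{\bar a a}[\bar c c]$; and if he descends into $\forall x_n\bigvee_a\phi^{(k)}_{\bar a a}$ and plays $c$, Eloise picks the disjunct $a$ with $M\models\phi^{(k)}_{\bar a a}[\bar c c]$. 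Since every play of $G(M,\phi)$ is finite and ends at a literal, which $\bar c$ then satisfies, $\Xi_E(\tau)$ is a winning strategy regardless of $\tau$.

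Third, the main work, I would define $\Xi_A(\tau)$ by having Abelard play $G(N,\phi)$ while simultaneously simulating $\ef_m(M,N)$ with himself following $\tau$, keeping the invariant that when the $G$-position is $(\phi^{(k)}_{\bar a},s)$ with $\bar b=(s(x_0),\dots,s(x_{n-1}))$, the sequence $\pi=\{(a_0,b_0),\dots,(a_{n-1},b_{n-1})\}$ is a position of $\ef_m(M,N)$ reached while Abelard follows $\tau$ (so $n+k=m$). At such a position: if $\pi$ is not a partial isomorphism there is a literal $\lambda$ true of $\bar a$ in $M$ and false of $\bar b$ in $N$, so Abelard descends $\phi^{(k)}_{\bar a}$ into $\phi^{(0)}_{\bar a}$ and then into $\lambda$, winning $G(N,\phi)$ at once. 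Otherwise $k\ge1$ and $\tau$ prescribes at $\pi$ either some $a^{*}\in M$ --- in which case Abelard descends $\phi^{(k)}_{\bar a}$ into the conjunct $\exists x_n\,\phi^{(k-1)}_{\bar a a^{*}}$, Eloise answers by rule~(5) with some $b^{*}\in N$, and $\pi\cup\{(a^{*},b^{*})\}$ is again a $\tau$-position --- or some $b^{*}\in N$ --- in which case Abelard descends into $\forall x_n\bigvee_a\phi^{(k-1)}_{\bar a a}$, plays $b^{*}$ by rule~(4), Eloise picks a disjunct $a^{*}\in M$ by rule~(3), and again $\pi\cup\{(a^{*},b^{*})\}$ is a $\tau$-position. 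Thus the invariant is maintained, and $G(N,\phi)$ being finite-depth every play terminates. When $\tau$ is winning, every play against $\tau$ reaches a non-partial-isomorphism position within $m$ moves; at the first such step the invariant forces the literal-descent above, so the play of $G(N,\phi)$ ends with Abelard winning, i.e. $\Xi_A(\tau)$ is winning.

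For the last clause: if $L$ is finite and relational, then up to logical equivalence there are only finitely many $L$-formulas of quantifier rank $\le k$ in the variables $x_0,\dots,x_{n-1}$, hence only finitely many distinct formulas among the $\phi^{(k)}_{\bar a}$; so in the recursion the infinitary $\bigwedge_{a\in M}$ and $\bigvee_{a\in M}$ may be replaced by conjunction and disjunction over these finite sets, and $\phi^{(0)}_{\bar a}$ is already a finite conjunction. By induction on $k$ each $\phi^{(k)}_{\bar a}$, and in particular $\phi$, is logically equivalent to a first-order sentence of quantifier rank $\le k$. The step I expect to require the most care is the third one: one must verify both that Abelard can always keep the $G(N,\phi)$-position synchronised with a $\tau$-play of $\ef_m(M,N)$ and that, when $\tau$ wins, the descent through $\phi$ is forced to terminate at a literal that $N$ refutes rather than at an irrelevant one --- and it is precisely the $\tau$-synchronisation invariant, together with the fact that $\tau$ produces a non-partial-isomorphism within $m$ moves, that excludes the latter possibility.
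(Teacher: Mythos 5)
Your proposal is correct and follows essentially the same route as the paper: the same Hintikka-style formula $\psi^{m,0}_{M,\emptyset}$, the same trivial (and $\tau$-independent) winning strategy for Eloise in $G(M,\phi)$, and the same device of simulating a $\tau$-play of $\ef_m(M,N)$ to steer Abelard's descent through $\phi$ in $G(N,\phi)$, with the same finiteness argument for finite relational $L$. The only (harmless) difference is that you repeat the literal conjunct at every level so that Abelard can win as soon as the EF position fails to be a partial isomorphism, whereas the paper keeps the literals only at the base and has Abelard descend through all $m$ quantifier levels before exhibiting the offending literal.
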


\begin{proof}
Suppose $s$ is an assignment into $M$ with domain $\{x_0,\ldots,x_{n-1}\}$. Let
$$
\begin{array}{lcl}
\psi^{0,n}_{M,s}&=&\displaystyle{\bigwedge_{i}\psi_i}\\
\psi^{m+1,n}_{M,s}&=&(\forall x_n\displaystyle{\bigvee_{a\in M}}
\psi^{m,n+1}_{M,s(a/x_n)})\wedge(\displaystyle{\bigwedge_{a\in M}}\exists x_n\psi^{m,n+1}_{M,s(a/x_n)}),
\end{array}$$
where $\psi_i$ lists all the literals in the variables $x_0,\ldots,x_{n-1}$ satisfied by $s$ in $M$.

The sentence $\phi$ we need to prove the theorem is $\psi^{m,0}_{M,\emptyset}$.  
Note that this only depends on $M$. Clearly Eloise has a trivial strategy $\Xi_E(\tau)$ in $G(M,\phi)$ (independently of $\tau$), and this strategy is always a winning strategy: after Abelard has chosen a value for $x_n$, she uses that value as her choice of $a\in M$, and after Abelard has chosen $a\in M$, she uses that element as her value for $x_n$.
Note that the quantifier-rank of $\psi^{m,n}_{M,s}$ is always $m$.

We now describe the strategy $\Xi_A(\tau)$ of Abelard in $G(N,\phi)$. 
We call a position of the EF-game a \emph{$\tau$-position }if it arises while Abelard is playing $\tau$. 
Suppose $s$ is an assignment into $M$ and $s'$ an assignment into $N$, both with domain $\{x_0,\ldots,x_{n-1}\}$. 
We use $s\cdot s'$ to denote the set of pairs $(s(x_i),s'(x_i))$, $i=0,\ldots,n-1$. The strategy of Abelard is to play 
$G(N,\phi)$ in such a way that if the position at any point is $(\psi^{i,m-i}_{M,s},s')$, then $s\cdot s'$ is a $\tau$-position.
In the beginning the position is $(\phi,\emptyset)$.
\begin{enumerate}

\item Suppose the position in $G(N,\phi)$ is $(\psi^{i,m-i}_{M,s},s')$, $i>0$,
and the next move for Abelard in $\ef_m(M,N)$ according to $\tau$  is $a\in M$. The strategy of Abelard is to choose the latter conjunct of $\psi^{i,m-i}_{M,s}$. Then Abelard chooses the element $a\in M$ in the big conjunction move. Now it is the turn of Eloise to choose some $b\in N$ as the value of $x_{m-i}$ and that will be the next move of Eloise in $\ef_m(M,N)$. The new position  $s(a/x_{m-i})\cdot s'(b/x_{m-i})$ is still a $\tau$-position in $\ef_m(M,N)$. The next position in $G(N,\phi)$ is 
\begin{equation}\label{ef2}
(\psi^{i-1,m-i+1}_{M,s(a/x_{m-i})},s'(b/x_{m-i})).
\end{equation}

\item Suppose the position in $G(N,\phi)$ is $(\psi^{i,m-i}_{M,s},s')$, $i>0$,
and the next move for Abelard in $\ef_m(M,N)$ according to $\tau$  is $b\in N$. The strategy of Abelard is to choose the former conjunct where he plays $b$ as $x_{m-i}$. Now it is the turn of Eloise to choose some $a\in M$ in $G(N,\phi)$. The new position  $s(a/x_{m-i})\cdot s'(b/x_{m-i})$ is still a $\tau$-position in $\ef_m(M,N)$. The next position in $G(N,\phi)$ is (\ref{ef2}).

\item Finally the position is $(\psi^{0,m}_{M,s},s')$. Note that $s \cdot s'$ is still a $\tau$-position in $\ef_m(M,N)$. The game  $\ef_m(M,N)$ has now ended. Abelard now chooses the first (in some fixed enumeration) literal conjunct of 	the formula $\psi^{0,m}_{M,s}$ that  is not satisfied by $s'$ in $N$, if any exist, otherwise he simply chooses the first conjunct. 	

\end{enumerate}

Suppose now $\tau$ was a winning strategy of Abelard. Then at the end of the game $s \cdot s'$ is a winning position for Abelard and therefore he is indeed able to choose a conjunct of 	the formula $\psi^{0,m}_{M,s}$ that is not satisfied by  $s'$ in $N$. He has won $G(N,\phi)$. 	

If $L$ is finite and relational, all the conjunctions and disjunctions are essentially finite because there are only finitely many non-equivalent formulas of a fixed quantifier rank. Thus, although $\phi$ is, a priori, a formula of the infinitary logic $L_{\infty\omega}$, it is logically equivalent to a first order formula. 
\end{proof}

There is a tight connection between $\sigma$, $\tau$ and $\Xi(\sigma,\tau)$. This is reflected in a connection between $\phi$ and $\ef_m(M,N)$ which goes deeper into the structure of $\phi$ than the mere condition that its quantifier rank is at most $m$. If the non-logical symbols of $\phi$ are in $L'\subset L$, then it suffices that  $\tau$ is a strategy of Eloise  in the game $\ef_m(M\restriction L',N\restriction L')$ between the reducts $M\restriction L'$ and $N\restriction L'$. If we know more about the syntax of $\phi$, for example that it is existential, universal or positive, we can modify $\ef_m(M,N)$ accordingly by stipulating that Abelard only moves in $M$, only moves in $N$, or that he has to win by finding an atomic (rather than literal) relation which holds in $M$ but not in $N$. 

Likewise, many useful observations can be made about the sentence $\phi$ and the strategy 
$\tau$. Here are some. We  already observed that the quantifier rank of the separating sentence $\phi$ is the same as the length of the EF game for which Abelard has a winning strategy. If $\tau$ is a winning strategy of Abelard even in the game $\ef_m(M\restriction L',N\restriction L')$ for some $L'\subset L$, then the separating sentence $\phi$ can be chosen so that its non-logical symbols are all in $L'$. If $\tau$ is such that Abelard plays only in $M$, we can make $\phi$ existential. If $\tau$ is such that Abelard plays only in $N$, we can make $\phi$ universal. If Abelard wins with $\tau$ even the harder game in which   he has to win by finding an atomic (rather than literal) relation which holds in $M$ but not in $N$,  then we can take $\phi$ to be a positive sentence.

Strategies in $\ef_m(M,N)$ reflect structural properties of $M$ and $N$. If we know a strategy of Eloise in $\ef_m(M_i,N_i)$ for $i\in I$, we can construct strategies of Eloise for EF games between products and sums of the models $M_i$ and the respective products and sums of the models $N_i$. This can be extended to so-called $\kappa$-local functors \cite{MR50:12646}. For an example of the use of tree-decompositions, see e.g. \cite{MR2289798}.

Just as in the Evaluation Game, the EF game permits modifications which extend the above two theorems to other logics. Accordingly, EF games are known for infinitary logics, generalized quantifiers, and higher order logics. In modal logic the corresponding game is called a bisimulation game. Even propositional logic has an EF game although there are no quantifiers \cite{DBLP:books/daglib/p/HellaV15}. In the EF game for team semantics the players moves teams, not elements \cite{MR2351449}. 

The sentences arising from Theorem~\ref{sent} are behind the Distributive Normal Forms of J. Hintikka \cite{MR0069778} and are known in infinitary logic as building blocks of Scott Sentences \cite{MR34:32}.

In an important variant called the Pebble Game, the players can pick only a fixed finite number of elements from the models but they can give up some elements in order to make room for new ones. This game is closely related to first order (or infinitary) logic with a fixed finite number of variables (free or bound). It was introduced independently by N. Immerman and B. Poizat in 1982. A good reference is \cite{MR1167033}.

\section{Further work}

The Evaluation Game, the Model Existence Game and the EF game go so deep into the essential concepts of logic such as truth, consistency, and separating models by sentences, that a lot of research in logic can be represented in terms of these games. This may not be very interesting in itself. However, the translations of the strategies between the games suggest a coherent uniform approach to syntax and semantics and at the same time to model theory and proof theory. Moreover, both the Evaluation Game and the EF game are oblivious to whether the models are finite or infinite, which gives them a useful role in computer science logic. Despite the vast literature on each of the three games separately, there seems to be a lot of potential for the study of their interaction as a manifestation of the Strategic Balance of Logic.


\def\Dbar{\leavevmode\lower.6ex\hbox to 0pt{\hskip-.23ex \accent"16\hss}D}
  \def\cprime{$'$} \def\ocirc#1{\ifmmode\setbox0=\hbox{$#1$}\dimen0=\ht0
  \advance\dimen0 by1pt\rlap{\hbox to\wd0{\hss\raise\dimen0
  \hbox{\hskip.2em$\scriptscriptstyle\circ$}\hss}}#1\else {\accent"17 #1}\fi}

\end{document}